\documentclass[a4paper,10pt]{amsart}
\usepackage{enumerate}
\usepackage{amsthm,amsmath,amssymb,graphics,graphicx,hyperref,epstopdf,mathrsfs}
\usepackage{breqn}           
\setlength\textwidth{5.5in}

\setlength\oddsidemargin{.5in}
\setlength\evensidemargin{.5in}

\setlength\topmargin{0in}
\setlength\textheight{8in}

\title{Hilbert's Syzygy Theorem for monomial ideals}
\author{Guillermo Alesandroni}
\address{Department of Mathematics, Wake Forest University, 1834 Wake Forest Rd, Winston-Salem, NC 27109}
\email{alesangc@wfu.edu}

\newtheorem{theorem}{Theorem}[section]

\newtheorem{corollary}[theorem]{Corollary}

\theoremstyle{definition}
\newtheorem{definition}[theorem]{Definition}

\newtheorem{example}[theorem]{Example}
\newtheorem{construction}[theorem]{Construction}

\DeclareMathOperator{\betti}{b}
\DeclareMathOperator{\pd}{pd}
\DeclareMathOperator{\mdeg}{mdeg}
\DeclareMathOperator{\lcm}{lcm}

\DeclareMathOperator{\Max}{max}

\DeclareMathOperator{\hdeg}{hdeg}

\DeclareMathOperator{\rank}{rank}

\begin{document}
\maketitle
\begin{abstract}

\end{abstract}
We give a new proof of Hilbert's Syzygy Theorem for monomial ideals. In addition, we prove the following. If $S=k[x_1,\ldots,x_n]$ is a polynomial ring over a field, $M$ is a squarefree monomial ideal in $S$, and each minimal generator of $M$ has degree larger than $i$, then $\pd(S/M)\leq n-i$.

\section{Introduction}

Hilbert's Syzygy Theorem, first proved by David Hilbert in 1890, states that, if $k$ is a field and $M$ is a finitely generated module over the polynomial ring $S=k[x_1,\ldots,x_n]$, then the projective dimension of $M$ is at most $n$. In this article we give a simple proof of this fact for monomial ideals.

Various mathematicians have preceded us in proving Hilbert's Syzygy Theorem for finitely generated modules, or in the more restrictive context of monomial ideals. Hilbert [H], Cartan and Eilenberg [CE, Ei], Schreyer [Ei, S], and Gasharov, Peeva and Welker [GPW], have all proved some form of this result using an array of arguments, including elimination theory, Gr\"obner basis, and homological algebra. Thus, our theorem joins a generous list of similar results and yet it has a distinctive mark: our proof is simple and short. It is simple because it is constructive, intuitive, and accessible to any reader who knows the preliminaries of free resolutions. It is short because in just a few lines we prove the theorem for squarefree monomial ideals, and in a few more lines we reduce the general case to the squarefree case.

The secret to simplify and shorten our proofs is to exploit intrinsic properties of monomials such as total degree, and least common multiple. For instance, we take advantage of these properties when we introduce the Taylor resolution as a multigraded resolution. As insubstantial as it sounds, it is manipulating these basic notions that makes all the difference (Theorem \ref{Theorem squarefree} also illustrates this point).

This work comes with a bonus. In the squarefree case, we prove something slightly stronger than Hilbert's Syzygy theorem. To be precise, we show that if $M$ is a squarefree monomial ideal minimally generated by monomials of degree larger than some integer $i$, then $\pd(S/M)\leq n-i$.

 \section{Background and Notation}

Throughout this paper $S$ represents a polynomial ring over a field, in $n$ variables. The letter $M$ always denotes a monomial ideal
in $S$. 

We open this section by defining the Taylor resolution as a multigraded free resolution, something that will turn out to be fundamental in the present work. The construction that we give below can be found in [Me].
 
\begin{construction}
Let $M=(m_1,\ldots,m_q)$. For every subset $\{m_{i_1},\ldots,m_{i_s}\}$ of $\{m_1,\ldots,m_q\}$, with $1\leq i_1<\ldots<i_s\leq q$, 
we create a formal symbol $[m_{i_1},\ldots,m_{i_s}]$, called a \textbf{Taylor symbol}. The Taylor symbol associated to $\{\}$ will be denoted by $[\varnothing]$.
For each $s=0,\ldots,q$, set $F_s$ equal to the free $S$-module with basis $\{[m_{i_1},\ldots,m_{i_s}]:1\leq i_1<\ldots<i_s\leq q\}$ given by the 
${q\choose s}$ Taylor symbols corresponding to subsets of size $s$. That is, $F_s=\bigoplus\limits_{i_1<\ldots<i_s}S[m_{i_1},\ldots,m_{i_s}]$ 
(note that $F_0=S[\varnothing]$). Define
\[f_0:F_0\rightarrow S/M\]
\[s[\varnothing]\mapsto f_0(s[\varnothing])=s\]
For $s=1,\ldots,q$, let $f_s:F_s\rightarrow F_{s-1}$ be given by
\[f_s\left([m_{i_1},\ldots,m_{i_s}]\right)=
 \sum\limits_{j=1}^s\dfrac{(-1)^{j+1}\lcm(m_{i_1},\ldots,m_{i_s})}{\lcm(m_{i_1},\ldots,\widehat{m_{i_j}},\ldots,m_{i_s})}
 [m_{i_1},\ldots,\widehat{m_{i_j}},\ldots,m_{i_s}]\]
 and extended by linearity.
 The \textbf{Taylor resolution} $\mathbb{T}_M$ of $S/M$ is the exact sequence
 \[\mathbb{T}_M:0\rightarrow F_q\xrightarrow{f_q}F_{q-1}\rightarrow\cdots\rightarrow F_1\xrightarrow{f_1}F_0\xrightarrow{f_0} 
 S/M\rightarrow0.\]
 \end{construction}
Following [Me], we define the \textbf{multidegree} of a Taylor symbol $[m_{i_1},\ldots,m_{i_s}]$, denoted $\mdeg[m_{i_1},\ldots,m_{i_s}]$, as follows:
  $\mdeg[m_{i_1},\ldots,m_{i_s}]=\lcm(m_{i_1},\ldots,m_{i_s})$.

\begin{definition}

 Let $M$ be a monomial ideal, and let
 \[\mathbb{F}:\cdots\rightarrow F_i\xrightarrow{f_i}F_{i-1}\rightarrow\cdots\rightarrow F_1\xrightarrow{f_1}F_0\xrightarrow{f_0} S/M\rightarrow 0\]
be a free resolution of $S/M$. 
We say that a basis element $[\sigma]$ of $\mathbb{F}$ has \textbf{homological degree} $i$, denoted $\hdeg[\sigma]=i$, if 
$[\sigma] \in F_i$. $\mathbb{F}$ is said to be a \textbf{minimal resolution} if for every $i$, the differential matrix $\left(f_i\right)$ of $\mathbb{F}$
has no invertible entries.
\end{definition}

\textit{Note}:
From now on, every time that we make reference to a free resolution $\mathbb{F}$ of $S/M$ we will assume that $\mathbb{F}$ is obtained from $\mathbb{T}_M$ by means of consecutive cancellations. To help us remember this convention, the basis elements of a free resolution will always be called Taylor symbols.

\begin{definition}
Let $M$ be a monomial ideal, and let
 \[\mathbb{F}:\cdots\rightarrow F_i\xrightarrow{f_i}F_{i-1}\rightarrow\cdots\rightarrow F_1\xrightarrow{f_1}F_0\xrightarrow{f_0} S/M\rightarrow 0\]
be a minimal free resolution of $S/M$.
\begin{itemize}
 \item For every $i$, the $i^{th}$ \textbf{Betti number} $\betti_i\left(S/M\right)$ of $S/M$ is $\betti_i\left(S/M\right)=\rank(F_i)$.
\item For every $i,j\geq 0$, the \textbf{graded Betti number} $\betti_{ij}\left(S/M\right)$ of $S/M$, in homological degree $i$ and internal degree $j$,
is \[\betti_{ij}\left(S/M\right)=\#\{\text{Taylor symbols }[\sigma]\text{ of }F_i:\deg[\sigma]=j\}.\]
\item For every $i\geq 0$, and every monomial $l$, the \textbf{multigraded Betti number} $\betti_{i,l}\left(S/M\right)$ of $S/M$, in homological degree $i$ and multidegree $l$,
is \[\betti_{i,l}\left(S/M\right)=\#\{\text{Taylor symbols }[\sigma]\text{ of }F_i:\mdeg[\sigma]=l\}.\]
\item The \textbf{projective dimension} $\pd\left(S/M\right)$ of $S/M$ is \[\pd\left(S/M\right)=\max\{i:\betti_i\left(S/M\right)\neq 0\}.\]
\end{itemize}
\end{definition}

\section{Hilbert's Syzygy Theorem in the squarefree case}

Without preamble, we state and prove one of our main results.

\begin{theorem}\label{Theorem squarefree}
Let $M=(m_1,\ldots,m_q)$ be a squarefree monomial ideal. Suppose that $\deg (m_1),\ldots,\deg (m_q) > k$, for some $k\geq 0$. Then $\pd(S/M)\leq n-k$.
\end{theorem}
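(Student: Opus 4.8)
The plan is to work directly with the Taylor resolution $\mathbb{T}_M$ and the multigraded Betti numbers, showing that every Taylor symbol surviving in a minimal resolution lives in homological degree at most $n-k$. Since $M$ is squarefree, every monomial $m_i$ is squarefree, and hence every least common multiple $\lcm(m_{i_1},\ldots,m_{i_s})$ is squarefree; in particular $\mdeg[m_{i_1},\ldots,m_{i_s}]$ is a squarefree monomial, so its degree is at most $n$. The key numerical observation is that if a Taylor symbol $[\sigma] = [m_{i_1},\ldots,m_{i_s}]$ has $\deg \mdeg[\sigma] = d$, then, because each generator involved has degree $> k$, we must have $d > k$ as soon as $s \geq 1$, and more importantly we can bound $s$ from above in terms of $d$. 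Indeed I expect the crucial inequality to be $s \leq d - k$: a squarefree lcm of degree $d$ cannot be the lcm of more than $d-k$ distinct squarefree monomials each of degree strictly greater than $k$, since each new generator in the list must contribute at least one new variable to the lcm once we are past the first (this needs a small argument — see below).

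Granting that inequality, the proof finishes quickly. If $\beta_{s,l}(S/M) \neq 0$ for some monomial $l$ of degree $d$, then some Taylor symbol of homological degree $s$ and multidegree $l$ survives into the minimal resolution, so $s \leq d - k$. But $l$ is squarefree (a subfactor of the construction forces every multidegree appearing to divide some $\lcm$, hence to be squarefree), so $d \leq n$, giving $s \leq n - k$. Therefore $\beta_s(S/M) = 0$ for all $s > n-k$, which is exactly $\pd(S/M) \leq n-k$.

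The main obstacle — really the only nontrivial point — is establishing the bound $s \leq d - k$, i.e.\ that a squarefree monomial of degree $d$ can be written as $\lcm(m_{i_1},\ldots,m_{i_s})$ with $\deg(m_{i_j}) > k$ only if $s \leq d-k$. I would argue by induction on $s$: reorder so that $m_{i_1},\ldots,m_{i_s}$ are listed so each partial lcm strictly grows where possible; if at some stage adding $m_{i_{j}}$ does not enlarge the lcm, then $m_{i_j}$ divides $\lcm(m_{i_1},\ldots,\widehat{m_{i_j}},\ldots,m_{i_s})$, and one shows such a symbol does not survive to the minimal resolution (its multidegree is shared with a lower-homological-degree symbol, forcing a consecutive cancellation) — so for surviving symbols every generator adds at least one variable, whence $d = \deg\lcm(m_{i_1},\ldots,m_{i_s}) \geq \deg(m_{i_1}) + (s-1) \geq (k+1) + (s-1) = k + s$. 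That is the inequality $s \leq d - k$, and the theorem follows. The delicate step is justifying that symbols whose multidegree repeats a lower one really are cancelled; I would either invoke the standard fact that a Taylor symbol $[\sigma]$ with $m_{i_j} \mid \lcm(\sigma \setminus m_{i_j})$ is superfluous, or phrase the whole degree count only over multidegrees actually attained by $\beta_{s,l} \neq 0$ and handle the cancellation bookkeeping through the ``consecutive cancellations'' convention already fixed in the Background section.
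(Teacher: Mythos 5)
Your overall plan --- show that $\betti_{s,l}(S/M)\neq 0$ forces $s\le\deg(l)-k$, then use $\deg(l)\le n$ from squarefreeness --- is sound, and the inequality you are after is in fact true. But your proposed proof of that inequality has a genuine gap, precisely at the point you yourself flag as ``delicate.''

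The lemma you need is that a Taylor symbol $[\sigma]=[m_{i_1},\ldots,m_{i_s}]$ with some $m_{i_j}$ dividing $\lcm(m_{i_1},\ldots,\widehat{m_{i_j}},\ldots,m_{i_s})$ cannot survive to a minimal resolution. This is false. Take $M=(ab,cd,ac,bd)\subseteq k[a,b,c,d]$, the edge ideal of a $4$-cycle, so $n=4$ and every generator has degree $2>1$, i.e.\ the theorem applies with $k=1$. One computes $\pd(S/M)=3$ and $\betti_{3,\,abcd}(S/M)=1$, so in every minimal resolution obtained from $\mathbb{T}_M$ by consecutive cancellations, exactly one Taylor symbol of homological degree $3$ and multidegree $abcd$ survives. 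Yet every one of the four $3$-element subsets of $\{ab,cd,ac,bd\}$ has lcm $abcd$ and contains a redundant generator: for instance in $\{ab,cd,ac\}$ one has $ac\mid\lcm(ab,cd)=abcd$, and the other three subsets are symmetric. So a redundant symbol \emph{must} survive. The unit entry in the Taylor differential created by the redundancy only guarantees that some consecutive cancellation is available; it does not guarantee that the particular symbol $[\sigma]$ gets cancelled, since that unit may be absorbed by a different cancellation in the same multidegree. Consequently your count ``each generator past the first adds a new variable to the lcm'' does not go through, and the bound $d\ge(k+1)+(s-1)$ is not established by your argument.

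The paper's proof avoids entirely the question of which Taylor symbols survive. It uses only that a minimal resolution $\mathbb{F}$ of $S/M$ is multigraded and that minimality forces every nonzero entry of the differential matrices to be a non-constant monomial. Starting from any $[\sigma_p]\in F_p$, pick a nonzero coefficient $a_r$ in $f_p[\sigma_p]=\sum a_i[\tau_i]$; then $\mdeg[\tau_r]$ is a proper divisor of $\mdeg[\sigma_p]$, so $\deg\mdeg[\tau_r]<\deg\mdeg[\sigma_p]$, and one iterates downward through the resolution. This yields symbols $[\sigma_1],\ldots,[\sigma_p]$ with $k+1\le\deg\mdeg[\sigma_1]<\cdots<\deg\mdeg[\sigma_p]\le n$, whence $p\le n-k$. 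That chain through the minimal resolution is the rigorous replacement for the degree count you were aiming for; if you wanted to salvage your own route instead, you would need an actual computation of $\betti_{s,l}$ (e.g.\ via the lcm lattice), which is considerably more machinery than the paper's two-fact argument.
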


\begin{proof}
Let 
\[\mathbb{F}: 0\rightarrow F_p\xrightarrow{f_p} F_{p-1}\cdots F_1\xrightarrow{f_1} F_0 \xrightarrow{f_0} S/M \rightarrow 0\]
 be a minimal resolution of $S/M$. Let $[\theta]$ be a Taylor symbol of $F_p$ and let $f_p[\theta]=\sum a_i [\tau_i]$. By the minimality of  $\mathbb{F}$, none of the $a_i$ is invertible, and at least one of the $a_i$ is not zero, say $a_r\neq 0$. It follows that 
 $\mdeg[\theta]=\mdeg(a_r[\tau_r]) $. Let $[\sigma_p]=[\theta]$ and $[\sigma_{p-1}]=[\tau_r]$. Note that $\deg[\sigma_{p-1}]<\deg[\sigma_p]$.\\
Suppose that $[\sigma_p],\ldots,[\sigma_{p-j}]$ are Taylor symbols of $F_p,\ldots, F_{p-j}$, respectively, such that, for all $i=1,\ldots,j$, $\deg[\sigma_{p-i}]<\deg[\sigma_{p-i+1}]$.\\ 
 Let $f_{p-j}[\sigma_{p-j}]=\sum \betti_i [\xi_i]$. By the minimality of $\mathbb{F}$, none of the $\betti_i$ is invertible, and at least one of the $\betti_i$ is not zero, say $\betti_s\neq 0$. It follows that $\mdeg[\sigma_{p-j}]=\mdeg(\betti_s[\xi_s])$.\\
 Let $[\sigma_{p-j-1}]=[\xi_s]$. Note that $\deg[\sigma_{p-j-1}]<\deg[\sigma_{p-j}]$. Thus, we can recursively define a sequence $[\sigma_1],\ldots,[\sigma_p]$ of Taylor symbols of $F_1,\ldots,F_p$, respectively, such that $k+1\leq \deg[\sigma_1]<\ldots<\deg[\sigma_p] \leq n$. Thus, $\{\deg[\sigma_1],\ldots,\deg[\sigma_p]\}$ is a subset of $\{k+1,\ldots,n\}$, and hence, $p=\#\{\deg[\sigma_1],\ldots,\deg[\sigma_p]\}\leq \#\{k+1,\ldots,n\}= n-k$.
 \end{proof}

Theorem \ref{Theorem squarefree} has some interesting applications. For instance, if $M$ is an edge ideal, then $\pd(S/M)\leq n-1$. More importantly, Hilbert's Syzygy Theorem for squarefree monomial ideals follows from Theorem \ref{Theorem squarefree}, with $k=0$.

\section{Hilbert's Syzygy Theorem for monomial ideals}

The following theorem is due to Gasharov, Hibi, and Peeva [GHP, Theorem 2.1]. We change their terminology and notation to make it consistent with our own.

\begin{theorem}  \label{Theorem GHP} 
Let $M$ be minimally generated by $G$, and let $\mathbb{F}$ be a minimal resolution of $S/M$. Given a monomial $m$, consider the ideal $M_m$, minimally generated by the elements of $G$ dividing $m$. Denote by 
$\mathbb{F}_m$ the subcomplex of $\mathbb{F}$ generated by the Taylor symbols of $\mathbb{F}$ with multidegrees dividing $m$. Then $\mathbb{F}_m$ is a minimal resolution of $S/M_m$.
\end{theorem}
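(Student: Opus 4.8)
The plan is to exploit the fact that, having been obtained from $\mathbb{T}_M$ by consecutive cancellations, $\mathbb{F}$ is a \emph{multigraded} free resolution: each $F_i=\bigoplus_{[\sigma]}S[\sigma]$ has the generator $[\sigma]$ sitting in multidegree $\mdeg[\sigma]$, and every nonzero entry of a differential matrix of $\mathbb{F}$ is a scalar multiple of the monomial $\mdeg[\sigma]/\mdeg[\tau]$ (in particular $\mdeg[\tau]$ divides $\mdeg[\sigma]$ whenever $[\tau]$ occurs in $f_i[\sigma]$). Granting this, $\mathbb{F}_m$ is honestly a subcomplex, since $\mdeg[\sigma]\mid m$ and $[\tau]$ occurring in $f_i[\sigma]$ force $\mdeg[\tau]\mid\mdeg[\sigma]\mid m$; and it is minimal because each of its differential matrices is a submatrix of a minimal one. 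Moreover $\mdeg[\varnothing]=1$ divides $m$, so $(\mathbb{F}_m)_0=F_0=S[\varnothing]$ and $(\mathbb{F}_m)_1=\bigoplus_{g\in G,\ g\mid m}S[g]$ with $f_1[g]=g[\varnothing]$; hence the image of $f_1$ restricted to $\mathbb{F}_m$ is $M_m$, and there is an evident augmentation $(\mathbb{F}_m)_0\to S/M_m$ with $H_0(\mathbb{F}_m)=S/M_m$.

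The core of the argument is a comparison of $\mathbb{Z}^n$-graded components. For a monomial $t$, let $(\mathbb{F})_t$ denote the multidegree-$t$ component of $\mathbb{F}$ augmented by $S/M$: this is a complex of $k$-vector spaces which in homological degree $i$ has basis $\{[\sigma]\in F_i:\mdeg[\sigma]\mid t\}$, whose differential is the ``scalar part'' of $f_i$ restricted to those symbols, and which ends in $(S/M)_t$. Since $\mathbb{F}$ is exact and multigraded, every $(\mathbb{F})_t$ is exact. Now fix a monomial $t$, let $\gcd(m,t)$ be the greatest common divisor of the monomials $m$ and $t$, and consider the multidegree-$t$ component of $\mathbb{F}_m$ augmented by $S/M_m$: the generator of $S[\sigma]$ survives in it exactly when $\mdeg[\sigma]$ divides both $m$ and $t$, i.e.\ when $\mdeg[\sigma]\mid\gcd(m,t)$, and the induced differential is again the scalar part of $f_i$. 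Thus this augmented complex agrees term by term and map by map with $(\mathbb{F})_{\gcd(m,t)}$, except possibly in homological degree $0$, where the augmentation targets are $(S/M_m)_t$ and $(S/M)_{\gcd(m,t)}$. But these coincide: $\gcd(m,t)\in M$ iff some $g\in G$ divides $\gcd(m,t)$, iff some $g\in G$ divides both $m$ and $t$, iff some minimal generator of $M_m$ divides $t$, iff $t\in M_m$; so $(S/M)_{\gcd(m,t)}=(S/M_m)_t$, compatibly with the augmentations. Consequently the multidegree-$t$ component of $\mathbb{F}_m$ augmented by $S/M_m$ is isomorphic to the exact complex $(\mathbb{F})_{\gcd(m,t)}$, hence exact, for every monomial $t$. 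Since exactness of a $\mathbb{Z}^n$-graded complex may be checked one multidegree at a time, $\mathbb{F}_m$ augmented by $S/M_m$ is exact; that is, $\mathbb{F}_m$ is a resolution of $S/M_m$, and we already saw it is minimal.

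The step I expect to cost the most care is the multigraded bookkeeping behind the first paragraph: one must verify that consecutive cancellations preserve the multigrading and keep the differential entries monomial of multidegree $\mdeg[\sigma]/\mdeg[\tau]$, so that passing to a fixed multidegree genuinely produces a complex on the ``naive'' basis with a scalar differential. The only other non-formal point is the short divisibility chain identifying $(S/M)_{\gcd(m,t)}$ with $(S/M_m)_t$. Everything else — that $\mathbb{F}_m$ is a minimal subcomplex, and the reduction of exactness to a degreewise comparison against the exact complex $\mathbb{F}$ — is routine.
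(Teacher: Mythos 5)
The paper does not prove this statement; it quotes it from Gasharov, Hibi, and Peeva [GHP, Theorem 2.1] and uses it as a black box, so there is no in-paper argument to compare against. Your proof is correct, and it is the natural strand-by-strand argument: you identify the multidegree-$t$ strand of the augmented complex $\mathbb{F}_m\to S/M_m$ with the multidegree-$\gcd(m,t)$ strand of the exact augmented complex $\mathbb{F}\to S/M$, using the equivalence $t\in M_m\Leftrightarrow\gcd(m,t)\in M$ to match the tails, and you correctly flag and handle the multigraded bookkeeping (each nonzero entry of $f_i$ from $[\sigma]$ to $[\tau]$ is a scalar times $\mdeg[\sigma]/\mdeg[\tau]$, so the induced map on a strand is that scalar and is independent of the ambient multidegree $t$). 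This buys exactly what is needed: minimality of $\mathbb{F}_m$ is free since its differential matrices are submatrices of minimal ones, and exactness reduces to exactness of the strands of $\mathbb{F}$.

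One assertion is stated without justification and deserves a sentence: that $(\mathbb{F}_m)_1=\bigoplus_{g\in G,\ g\mid m}S[g]$ with $f_1[g]=g[\varnothing]$. This presupposes that every rank-one Taylor symbol $[g]$, $g\in G$, survives the consecutive cancellations producing $\mathbb{F}$ from $\mathbb{T}_M$. It does: any entry of a differential into the $[g]$-slot from homological degree $2$ has multidegree $\mdeg[\alpha]/g$ for some surviving $[\alpha]$ with $\hdeg[\alpha]=2$, and for that entry to be a unit one would need $\mdeg[\alpha]=g$, i.e.\ $\lcm(m_j,m_k)=g$ for some pair $m_j\neq m_k$ in $G$, which forces $m_j\mid g$ and $m_k\mid g$ and contradicts minimality of $G$. (In any case this paragraph of yours is more motivational than load-bearing: your strand comparison already gives $H_0(\mathbb{F}_m)=S/M_m$ without needing the explicit description of $(\mathbb{F}_m)_1$.)
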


\begin{corollary}\label{Corollary 2}
Let $M$ be minimally generated by $G$. Given a monomial $m$, consider the ideal $M_m$ minimally generated by the elements of $G$ dividing $m$. Then, for all $i$
\[\betti_{i,m}(S/M)=\betti_{i,m}(S/M_m).\]
\end{corollary}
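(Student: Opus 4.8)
The plan is to read this off directly from Theorem \ref{Theorem GHP}, with only a small bookkeeping step in between. Fix a minimal resolution $\mathbb{F}:\cdots\rightarrow F_i\xrightarrow{f_i}F_{i-1}\rightarrow\cdots\rightarrow F_0\xrightarrow{f_0}S/M\rightarrow 0$, whose bases consist of Taylor symbols. By definition, $\betti_{i,m}(S/M)$ is the number of Taylor symbols $[\sigma]$ of $F_i$ with $\mdeg[\sigma]=m$.

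First I would invoke Theorem \ref{Theorem GHP}: the subcomplex $\mathbb{F}_m\subseteq\mathbb{F}$ generated by the Taylor symbols of $\mathbb{F}$ whose multidegree divides $m$ is a minimal resolution of $S/M_m$. Consequently the multigraded Betti numbers of $S/M_m$ may be computed from $\mathbb{F}_m$, so $\betti_{i,m}(S/M_m)$ equals the number of Taylor symbols of $(\mathbb{F}_m)_i$ of multidegree exactly $m$.

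The remaining observation is purely combinatorial. By construction, the Taylor symbols forming the basis of $(\mathbb{F}_m)_i$ are precisely the Taylor symbols $[\sigma]$ of $F_i$ with $\mdeg[\sigma]\mid m$. Among those, the ones of multidegree exactly $m$ are exactly all the Taylor symbols of $F_i$ of multidegree $m$: since $m\mid m$, passing from $F_i$ to $(\mathbb{F}_m)_i$ discards no symbol of multidegree $m$ and creates none. Hence
\[
\betti_{i,m}(S/M)=\#\{[\sigma]\in F_i:\mdeg[\sigma]=m\}=\#\{[\sigma]\in (\mathbb{F}_m)_i:\mdeg[\sigma]=m\}=\betti_{i,m}(S/M_m).
\]

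I do not expect any real obstacle here; the only point requiring a word of care is the legitimacy of reading $\betti_{i,m}(S/M_m)$ off the particular resolution $\mathbb{F}_m$, which is justified because Theorem \ref{Theorem GHP} guarantees $\mathbb{F}_m$ is minimal (and the multigraded Betti numbers are an invariant of $S/M_m$). One can also note that the paper's standing convention — every resolution in sight arises from a Taylor resolution by consecutive cancellations — applies to $\mathbb{F}_m$ as well, so the count of Taylor symbols in $(\mathbb{F}_m)_i$ of a given multidegree is exactly the corresponding multigraded Betti number of $S/M_m$.
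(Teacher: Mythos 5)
Your proof is correct and follows essentially the same route as the paper's: invoke Theorem \ref{Theorem GHP} to obtain the minimal resolution $\mathbb{F}_m$ of $S/M_m$, then observe that $\mathbb{F}$ and $\mathbb{F}_m$ have identical Taylor symbols of multidegree exactly $m$ in each homological degree. You have merely spelled out the combinatorial bookkeeping in more detail than the paper does.
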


\begin{proof}
Let $\mathbb{F}$ be a minimal resolution of $M$, and let $\mathbb{F}_m$ be the minimal resolution of $S/M_m$, given by Theorem \ref{Theorem GHP}. By construction, $\mathbb{F}$ and $\mathbb{F}_m$ have the same Taylor symbols $[\sigma]$, with $\hdeg[\sigma]=i$, and $\mdeg[\sigma]=m$. 
\end{proof}

\begin{construction}
Let $M=(m_1,\ldots,m_q)$, where $m_i=x_1^{\alpha_{i1}}\ldots x_n^{\alpha_{in}}$, for all $i$. Let $m=\lcm(m_1,\ldots,m_q)$. Then $m$ factors as $m=x_1^{\alpha_1}\ldots x_n^{\alpha_n}$, where $\alpha_j=\Max(\alpha_{1j},\ldots,\alpha_{qj})$, for all $j=1,\ldots,n$. For all $i=1,\ldots,q$, define $m'_i=x_1^{\beta_{i1}}\ldots x_n^{\beta_{in}}$, where 
\[\beta_{ij}=
\begin{cases}
\alpha_j\text{, if } \alpha_{ij}=\alpha_j \\
0\text{, otherwise.}
\end{cases}\]
Let $M'=(m'_1,\ldots,m'_q)$. The ideal $M'$ will be referred to as the \textbf{twin ideal} of $M$.
\end{construction}

\begin{example}
Let $M=(m_1=x^2y^2z, m_2=x^2z^2, m_3=y z^2)$. \\
Notice that $m=\lcm(m_1,m_2,m_3)
=x^2y^2z^2$. Then $m'_1=x^2z^2$; $m'_2=x^2z^2$; $m'_3=z^2$. Thus, the twin ideal of $M$ is $M'=(x^2y^2,x^2z^2,z^2)=(x^2y^2,z^2)$.
\end{example}

The terminology twin ideal is suggestive; the next theorem [Al, Theorem 4.10] justifies this choice of words. 

\begin{theorem} \label{Al Theorem 4.10}
Let $M'$ be the twin ideal of $M=(m_1,\ldots,m_q)$, and let $m=\lcm(m_1,\ldots,m_q)$. Then $\betti_{i,m}(S/M)=\betti_{i,m}(S/M')$, for all $i$.
\end{theorem}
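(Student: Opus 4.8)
The plan is to show that $\betti_{i,m}(S/M)$ is controlled entirely by the combinatorial record of which subsets of $\{m_1,\dots,m_q\}$ have least common multiple $m$, and that this record does not change when we pass to the twin ideal $M'$. Write $m=x_1^{\alpha_1}\cdots x_n^{\alpha_n}$ and, for each $i$, put $S_i=\{\,j:\alpha_{ij}=\alpha_j\,\}$; by construction $S_i$ is exactly the set of variables occurring in $m'_i$. The first step is the elementary observation that, for every $\sigma\subseteq\{1,\dots,q\}$,
\[\lcm(m_j:j\in\sigma)=m\iff\bigcup_{j\in\sigma}S_j=\{\,j:\alpha_j>0\,\}\iff\lcm(m'_j:j\in\sigma)=m,\]
both outer equivalences being immediate comparisons of $x_j$-exponents (using $\alpha_j=\max_i\alpha_{ij}$). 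Taking $\sigma=\{1,\dots,q\}$ gives in particular $\lcm(m'_1,\dots,m'_q)=m$, so $m$ is the top multidegree for $M'$ as well.

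Next I would isolate the multidegree-$m$ strand of the Taylor resolution $\mathbb{T}_M$. Every coefficient of a Taylor differential is a monomial $\lcm(\sigma)/\lcm(\widehat{\sigma})$, which is a unit precisely when $\lcm(\sigma)=\lcm(\widehat{\sigma})$; hence tensoring $\mathbb{T}_M$ with $k=S/\mathfrak{m}$ and extracting the multidegree-$m$ component yields a complex $C_\bullet(M)$ of $k$-vector spaces whose term in homological degree $s$ has basis $\{[\sigma]:|\sigma|=s,\ \lcm(m_j:j\in\sigma)=m\}$ and whose differential sends $[\sigma]\mapsto\sum(-1)^{t+1}[\sigma\setminus\{j_t\}]$, summed over those positions $t$ for which the remaining generators still have least common multiple $m$. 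Since $\mathrm{Tor}$ may be computed from any free resolution, $\dim_k H_i\bigl(C_\bullet(M)\bigr)=\dim_k\mathrm{Tor}^S_i(S/M,k)_m=\betti_{i,m}(S/M)$. (In the language of Section 2 this reads: a unit entry of a differential matrix joins basis elements of equal multidegree, so a multidegree-$m$ Taylor symbol can only be cancelled against another multidegree-$m$ symbol, and carrying out exactly those cancellations computes $H_\bullet(C_\bullet(M))$.)

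Now the first step shows that $C_\bullet(M)$ and $C_\bullet(M')$ are literally the same complex of $k$-vector spaces: the same bases in each homological degree, and the same differential, since both depend only on the set system $\{S_1,\dots,S_q\}$ and on the signs determined by the orderings of the subsets $\sigma$. Therefore $\betti_{i,m}(S/M)=\dim_k H_i(C_\bullet(M))=\dim_k H_i(C_\bullet(M'))=\betti_{i,m}(S/M')$ for every $i$, as claimed.

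The step I expect to demand the most care is the identification of $\betti_{i,m}$ with the homology of the combinatorial strand $C_\bullet(M)$. If one is willing to use that $\mathrm{Tor}$ is independent of the chosen free resolution, this is immediate; if instead one wants to remain strictly within the consecutive-cancellation formalism, one must verify --- using homogeneity of the differentials --- that cancellations involving multidegree-$m$ symbols never leave multidegree $m$ and never alter which unit entries are available among the multidegree-$m$ symbols except through a further cancellation in multidegree $m$, after which performing all such cancellations leaves exactly $\dim_k H_\bullet(C_\bullet(M))$ Taylor symbols of multidegree $m$ in each homological degree.
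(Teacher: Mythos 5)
The paper does not actually prove this statement; it is cited from [Al, Theorem~4.10] and used as a black box. So there is no in-paper argument to compare against, and your proposal stands as a self-contained alternative. That said, it is essentially correct and takes the natural route: passing to the multidegree-$m$ strand of $\mathbb{T}_M\otimes_S k$ reduces $\betti_{i,m}$ to the homology of a purely combinatorial complex $C_\bullet$ that records which subsets $\sigma\subseteq\{1,\dots,q\}$ achieve $\lcm=m$ (and hence which facets of such a $\sigma$ still achieve it), and this record is visibly unchanged under the passage $m_i\mapsto m'_i$. Your identification $\betti_{i,m}(S/M)=\dim_k\mathrm{Tor}^S_i(S/M,k)_m=\dim_k H_i(C_\bullet(M))$ is legitimate because $\mathbb{T}_M$ is a multigraded free resolution and $\mathrm{Tor}$ is resolution-independent; one should note explicitly that this remains true for $M'$ even though $m'_1,\dots,m'_q$ need not be a minimal generating set (the Taylor complex is exact for any monomial generating set), since the paper's own example shows the twin generators can be redundant.

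Two small points of care. First, your two descriptions of $S_i$ disagree: $\{j:\alpha_{ij}=\alpha_j\}$ automatically contains every $j$ with $\alpha_j=0$, so it is not ``the set of variables occurring in $m'_i$''; you want $S_i=\{\,j:\alpha_{ij}=\alpha_j>0\,\}$ for the displayed equivalence $\lcm(m_j:j\in\sigma)=m\iff\bigcup_{j\in\sigma}S_j=\{j:\alpha_j>0\}$ to be literally true (with the other convention you would need $\supseteq$). Second, the step ``$C_\bullet(M)$ and $C_\bullet(M')$ are literally the same complex'' is sound but deserves one more sentence: the condition $\lcm(\,\cdot\,)=m$ for a subset of $\{m_j\}$ and for the corresponding subset of $\{m'_j\}$ agree by your Step~1, the signs $(-1)^{t+1}$ depend only on the ordering of $\sigma$ and not on the generators, and $\lcm(m'_1,\dots,m'_q)=m$ so the two strands are taken in the same multidegree. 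With these clarifications the argument is complete, and it is cleaner than trying to stay strictly within the consecutive-cancellation bookkeeping, which you rightly flag as the fussier alternative.
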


Now we have all the tools to prove Hilbert's Syzygy theorem for monomial ideals.
 
 \begin{theorem}
Let $M$ be a monomial ideal of $S=k[x_1,\ldots,x_n]$. Then $\pd(S/M)\leq n$.
 \end{theorem}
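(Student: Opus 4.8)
The plan is to bound $\pd(S/M)$ by using Corollary \ref{Corollary 2} and Theorem \ref{Al Theorem 4.10} to replace $M$ by an ideal whose Taylor symbols have especially rigid multidegrees, and then to re-run the counting argument of Theorem \ref{Theorem squarefree} with ``number of variables in the support'' playing the role of ``degree''. We may assume $0\neq M\subsetneq S$, so the minimal generators $m_1,\dots,m_q$ of $M$ are non-constant. Set $p=\pd(S/M)$; if $p=0$ there is nothing to prove, so assume $p\geq 1$ and pick a monomial $m$ with $\betti_{p,m}(S/M)\neq 0$.

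First I would arrange that $m$ is the least common multiple of the minimal generators of $M_m$. Since $\mathbb{F}$ is obtained from $\mathbb{T}_M$ by consecutive cancellations, every Taylor symbol in homological degree $p$ has multidegree $\lcm(m_{i_1},\dots,m_{i_s})$ for some nonempty subset of $\{m_1,\dots,m_q\}$; choosing such a symbol of multidegree $m$, each $m_{i_j}$ divides $m$ and hence is a minimal generator of $M_m$, while conversely every minimal generator of $M_m$ divides $m$. Therefore $m$ equals the lcm of the minimal generators of $M_m$. By Corollary \ref{Corollary 2} we get $\betti_{p,m}(S/M_m)=\betti_{p,m}(S/M)\neq 0$, and then by Theorem \ref{Al Theorem 4.10}, $\betti_{p,m}(S/M'_m)\neq 0$, where $M'_m$ is the twin ideal of $M_m$; in particular $M'_m\neq S$ and $\pd(S/M'_m)\geq p$.

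Next I would exploit the shape of $M'_m$. Writing $m=x_1^{\alpha_1}\cdots x_n^{\alpha_n}$, every generator of $M'_m$ has the form $\prod_{k\in T}x_k^{\alpha_k}$ for some nonempty $T\subseteq\{1,\dots,n\}$, and this stays true after discarding redundant generators; hence the multidegree of every Taylor symbol of a minimal resolution of $S/M'_m$, being an lcm of such generators, again has the form $\prod_{k\in T}x_k^{\alpha_k}$. For two such monomials $u=\prod_{k\in T_1}x_k^{\alpha_k}$ and $v=\prod_{k\in T_2}x_k^{\alpha_k}$, $u$ properly divides $v$ if and only if $T_1\subsetneq T_2$. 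Now, exactly as in the proof of Theorem \ref{Theorem squarefree}, in a minimal resolution of $S/M'_m$ one constructs Taylor symbols $[\sigma_1],\dots,[\sigma_{p'}]$ in homological degrees $1,\dots,p'$, where $p'=\pd(S/M'_m)$, with $\mdeg[\sigma_j]$ a proper divisor of $\mdeg[\sigma_{j+1}]$. Their supports then form a strictly increasing chain of nonempty subsets of $\{1,\dots,n\}$, so $p'\leq n$, and therefore $p\leq p'\leq n$.

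The hard part is essentially the bookkeeping in the reduction: confirming that the witnessing multidegree $m$ may be taken to be an lcm of minimal generators (so that Theorem \ref{Al Theorem 4.10} applies to $M_m$), and that the minimal generators of the twin ideal $M'_m$ all carry the uniform exponents $\alpha_k$ on their supports. Once those two points are in place, the support-version of the argument in Theorem \ref{Theorem squarefree} goes through word for word. As an alternative to the last step, one can note that the substitution $x_k\mapsto x_k^{\alpha_k}$ identifies $M'_m$ with the extension to $S$ of a squarefree monomial ideal of $k[x_1^{\alpha_1},\dots,x_n^{\alpha_n}]$, and that tensoring a minimal resolution over that subring with the free module $S$ yields a minimal $S$-resolution of the same length, so that Theorem \ref{Theorem squarefree} applies directly; but the support argument avoids invoking any base change.
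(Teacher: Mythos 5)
Your proposal is correct and follows the paper's overall strategy: work one multidegree at a time, pass from $M$ to $M_m$ via Corollary \ref{Corollary 2}, then to the twin ideal $M'_m$ via Theorem \ref{Al Theorem 4.10}, and finally exploit the rigid shape of the twin ideal's multidegrees to bound projective dimension. You depart from the paper in two places, both to your credit. First, you verify up front that a witnessing multidegree $m$ with $\betti_{p,m}(S/M)\neq 0$ is necessarily the lcm of the minimal generators of $M_m$, which is exactly the hypothesis Theorem \ref{Al Theorem 4.10} requires; the paper applies that theorem to an arbitrary monomial $l$, which is only literally licensed when $l$ is that lcm (the remaining case is harmless since both Betti numbers vanish, but you make this explicit). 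Second, for the final bound the paper substitutes $y_j=x_j^{\alpha_j}$, interprets $M'_l$ as a squarefree ideal in $k[y_1,\ldots,y_n]$, and invokes Theorem \ref{Theorem squarefree}, tacitly using that a minimal $k[y]$-resolution base-changes to a minimal $S$-resolution of the same length (and tacitly handling indices with $\alpha_j=0$); you instead re-run the strictly-decreasing-multidegree argument from the proof of Theorem \ref{Theorem squarefree} directly in $S$, with support size in the role of total degree, which sidesteps the change of variables and any base-change justification. Both are sound; yours is slightly more self-contained, the paper's slightly shorter because it cites Theorem \ref{Theorem squarefree} as a black box. You also correctly note, as the paper's example illustrates, that the twin ideal's listed generators need not be minimal, so one should pass to a minimal generating set before running the support argument.
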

 
 \begin{proof}
 Let $l$ be a monomial. Let $M_l$ be the ideal generated by the elements of the minimal generating set of $M$ dividing $l$. Let us denote by $\{m_1,\ldots,m_q\}$ the minimal generating set of $M_l$, where 
 $m_i=x_1^{\alpha_{i1}}\ldots x_n^{\alpha_{in}}$, for all $i$. Let $\lcm(m_1,\ldots,m_q)=x_1^{\alpha_1}\ldots x_n^{\alpha_n}$. Then the twin ideal of $M_l$ is given by $M'_l=(m'_1,\ldots,m'_q)$, where each $m'_i$ factors as 
\[m'_i=x_1^{\beta_{i1}}\ldots x_n^{\beta_{in}} \text{, with } \beta_{ij}=
 \begin{cases}
 \alpha_j \text{, if }\alpha_{ij}=\alpha_j \\
 0 \text{, otherwise.}
 \end{cases}
\]
 In particular, each $x_j$ appears with exponent either $\alpha_j$ or $0$ in the factorization of each generator $m'_1,\ldots,m'_q$. Let us make the change of variables $y_1=x_1^{\alpha_1},\ldots,y_n=x_n^{\alpha_n}$. Then
 $m'_1,\ldots,m'_q$ can be represented in the form
 
\[ \begin{array}{ccl}
 m'_1 &= &y_1^{\delta_{11}} \ldots y_n^{\delta_{1n}}\\
        &\vdots & \\
 m'_q & =  & y_1^{\delta_{q1}}  \ldots y_n^{\delta_{qn}},      
 \end{array}\]
 where, for all $1\leq i\leq q$, and all $1\leq j\leq n$, $\delta_{ij}=\begin{cases}
 														1\text{ if } \alpha_{ij}=\alpha_j\\
														0 \text{ if } \alpha_{ij}\neq \alpha_j.
														\end{cases}$
Hence, we can interpret $M'_l$ as a squarefree monomial ideal in $k[y_1,\ldots,y_n]$. By Theorem \ref{Theorem squarefree}, $\pd\left(\dfrac{k[y_1,\ldots,y_n]}{M'_l}\right)\leq n$. Therefore, $\pd (S/M'_l) \leq n$ and thus, $\betti_{i,l}(S/M'_l)=0$, for all $i\geq n+1$. Finally, from Corollary \ref{Corollary 2} and Theorem \ref{Al Theorem 4.10}, we obtain that $\betti_{i,l}(S/M)=\betti_{i,l}(S/M_l)=\betti_{i,l}(S/M'_l)=0$ for all $i\geq n+1$. Since $l$ is an arbitrary monomial, we must have that $\pd(S/M)\leq n$.
 \end{proof}

 \bigskip

\noindent \textbf{Acknowledgements}: A big thanks to my friend Mauricio Rivas for his valuable comments. I am grateful to my dear wife Danisa for her support and encouragement, and for typing this article.

\end{document}